\numberwithin{equation}{section}
\numberwithin{figure}{section}
\theoremstyle{plain}
\newtheorem{thm}{\protect\theoremname}
\theoremstyle{definition}
\newtheorem{defn}[thm]{\protect\definitionname}
\theoremstyle{plain}
\newtheorem{cor}[thm]{\protect\corollaryname}
\theoremstyle{remark}
\newtheorem{rem}[thm]{\protect\remarkname}
\theoremstyle{plain}
\newtheorem{prop}[thm]{\protect\propositionname}
\providecommand{\corollaryname}{Corollary}
\providecommand{\definitionname}{Definition}
\providecommand{\propositionname}{Proposition}
\providecommand{\remarkname}{Remark}
\providecommand{\theoremname}{Theorem}
\begin{document}
\subjclass[2020]{Primary 47B32. Secondary 46E22, 47A20, 47B80, 60G15.}
\title{Random Operator-Valued Kernels and Moment Dilations }
\begin{abstract}
This paper studies random operator-valued positive definite (p.d.)
kernels and their connection to moment dilations. A class of random
p.d. kernels is introduced in which the positivity requirement is
imposed only in expectation, extending the deterministic case. Basic
properties are established, including convexity, sampling stability,
and the relationship between mean kernels and deterministic ones.
A Gaussian factorization result is proved under a Hilbert-Schmidt/trace
condition, clarifying the difference between pathwise and mean-square
positive definiteness and linking the construction to radonification
in the non-trace-class setting. Moment dilation is then formulated
for random operators, extending classical dilation theory to preserve
equality of mixed moments in expectation. The resulting dilation triples
admit interpretation in terms of unitary dynamics, connecting statistical
moment structure with operator-theoretic realization.
\end{abstract}

\author{James Tian}
\address{Mathematical Reviews, 535 W. William St, Suite 210, Ann Arbor, MI
48103, USA}
\email{jft@ams.org}
\keywords{operator-valued positive definite kernels, random kernels, Hilbert
space-valued Gaussian processes, Kolmogorov decomposition, trace-class
finite-rank approximation, moment dilation.}
\maketitle

\section{Introduction}\label{sec:1}

Positive definite kernels are fundamental objects in operator theory,
harmonic analysis, and stochastic processes, where they connect covariance
structures to the geometry of Hilbert spaces (see, e.g., \cite{MR51437}).
In the classical setting, an operator-valued kernel $K\colon X\times X\rightarrow\mathcal{L}\left(H\right)$
is positive definite (p.d.) if all of its finite Gram matrices are
positive semidefinite, that is, 
\[
\sum_{i,j}\left\langle a_{i},K\left(s_{i},s_{j}\right)a_{j}\right\rangle \geq0
\]
for all finite $\left\{ \left(s_{i},a_{i}\right)\right\} $ in $X\times H$.
Here, $X$ is an arbitrary set (nonempty) and $H$ is a Hilbert space.
Throughout, inner products are assumed to be linear in the second
variable. $\mathcal{L}\left(H\right)$ denotes the space of all bounded
linear operators in $H$. 

This property is equivalent to the existence of a Kolmogorov/GNS type
decomposition 
\[
K\left(s,t\right)=V^{*}_{s}V_{t}
\]
where $V\colon H\rightarrow H'$ is an operator into an enlarged Hilbert
space $H'$. This deterministic notion underlies a wide range of constructions,
including reproducing kernel Hilbert spaces (RKHS) \cite{MR2184959,MR2055811,MR1897152}
and dilation theory \cite{MR2760647}.

In fact, $H'$ can be chosen to be the RKHS of an associated scalar-valued
kernel $\tilde{K}\colon\left(X\times H\right)^{2}\rightarrow\mathbb{C}$,
given by 
\[
\tilde{K}\left(\left(s,a\right),\left(t,b\right)\right)\coloneqq\left\langle a,K\left(s,t\right)b\right\rangle 
\]
defined for all $s,t\in X$ and $a,b\in H$. This method, by associating
a scalar-valued kernel $\tilde{K}$ to any operator-valued $K\colon X\times X\rightarrow\mathcal{L}\left(H\right)$,
was introduced by G. Pedrick \cite{MR2938971}; see also the exposition
in \cite{MR4250453} and more recent developments in \cite{MR4900382}.
A key advantage of this approach is that it avoids taking quotients
and working with equivalence classes, as in the classical GNS construction.
The dilation space is simply $H'=RKHS(\tilde{K})$, a Hilbert space
of functions on the product set $X\times H$ with the natural reproducing
property inherited from $\tilde{K}$.

The present work develops a probabilistic extension in which kernels
are allowed to vary randomly over a probability space $\left(\Omega,\mathbb{P}\right)$,
and the positivity requirement is imposed only on the expectation
of the associated Gram forms. Such random p.d. kernels form a natural
enlargement of the deterministic class, accommodating random perturbations,
stochastic models, and empirical approximations. Their mean kernels
are automatically p.d. in the deterministic sense, and they admit
structural results mirroring those of the classical theory, but with
additional subtleties arising from measurability and integrability
constraints.

A central aspect of this analysis is the relationship between these
random kernels and Gaussian process factorizations. Under a Hilbert-Schmidt/trace
condition on the diagonal, the mean kernel can be realized as the
covariance of an $H$-valued Gaussian process, and the random kernel
becomes pathwise positive definite almost surely. When this trace
condition fails, one may still represent the mean kernel in terms
of cylindrical Gaussian processes, but the associated random kernels
exist only as positive definite sesquilinear forms without bounded-operator
realization, reflecting the phenomenon of non-radonification.

In the later part of the paper, attention turns to moment dilation
for random operators, an extension of classical dilation theory in
which equalities of mixed moments are preserved in expectation. The
probabilistic kernel results developed here provide the foundational
tools for such constructions, as mean kernels arising from random
processes naturally encode the statistical moment structure needed
for dilation.

\subsection*{Literature Overview}

For classical treatments and applications of RKHS in approximation
theory, regularization, and inverse problems, see \cite{MR2184959,MR2055811,MR1897152}.
Recent developments have extended RKHS techniques into areas such
as tropical geometry and kernel learning theory \cite{MR4751741,MR4896090,MR4847432,MR2426053},
as well as operator-theoretic contexts involving Hilbert modules and
contractive multipliers \cite{MR2785850,MR3626500}. Analytic function
theory and universality limits provide additional fertile ground for
the kernel methods \cite{MR4707572}.

Classical dilation theory, from the Sz.-Nagy-Foias framework \cite{MR2760647}
to modern noncommutative multivariable generalizations \cite{MR69403,MR1858778,MR4248036},
includes foundational results for sequences of noncommuting operators
\cite{MR972704,MR1168596,MR1182494,MR1681749}, Wold decompositions
and doubly commuting isometries \cite{MR3151275}, and Hilbert module
approaches \cite{MR2891735}. Generalizations to unitary dilations
of several contractions and decompositions of positive definite kernels
are found in \cite{MR1902903,MR1160416}, while structured dilation
models for tetrablock contractions and commuting tuples appear in
\cite{MR4534539,MR4200246}. These developments underscore the role
of positive definite kernels as a unifying theme across deterministic
and probabilistic dilation theory.

The references given here are only a small sample of the extensive
literature on RKHS and dilation theory. For a broader view, see the
works cited above and the references therein, which together provide
a more comprehensive picture of the depth and variety of results in
these areas.

\section{Random Positive Kernels}

In the mean‐square analysis of random operators, a natural object
to study is the family of operator‐valued positive definite kernels
that encode their correlation structure. This section recalls the
basic definitions in both the deterministic and random settings, as
these kernels will serve as the starting point for the moment dilations
developed in \prettyref{sec:3}. In the deterministic case, positive
definiteness of $K\colon X\times X\to\mathcal{L}(H)$ means that all
finite Gram matrices built from $K$ are positive semidefinite. In
the random case, the same condition is imposed only in expectation
over the probability space $(\Omega,\mathbb{P})$, allowing individual
realizations to fail pointwise positivity while the averaged structure
remains positive. This averaged notion captures exactly the second‐moment
data needed for the constructions in the next section.

Let $X$ be a nonempty set, and $H$ a Hilbert space. Recall that
$K\colon X\times X\rightarrow\mathcal{L}\left(H\right)$ is positive
definite (p.d.) if 
\begin{equation}
\sum^{N}_{i,j=1}\left\langle a_{i},K\left(s_{i},s_{j}\right)a_{j}\right\rangle \geq0\label{eq:b1}
\end{equation}
for all finite points $\left\{ \left(a_{i},s_{i}\right)\right\} ^{N}_{i=1}$
in $X\times H$.

Let $\left(\Omega,\mathbb{P}\right)$ be a probability space. A map
$k\colon\Omega\times X\times X\rightarrow\mathcal{L}\left(H\right)$
is called a random p.d. kernel if, for all $s,t\in X$, $k\left(\cdot,s,t\right)$
is strongly measurable on $\Omega$, and 
\begin{equation}
\mathbb{E}\left[\sum^{N}_{i,j=1}\left\langle a_{i},k\left(\cdot,s_{i},s_{j}\right)a_{j}\right\rangle \right]\geq0\label{eq:b2}
\end{equation}
for all finite $\left\{ \left(a_{i},s_{i}\right)\right\} ^{N}_{i=1}$
in $X\times H$.
\begin{defn}
Let $pd\left(X,H\right)$ be the set of all operator-valued p.d. kernels
as in \eqref{eq:b1}. Denote by $rpd\left(\Omega,X,H\right)$ the
set of all random kernels satisfying \eqref{eq:b2}. 
\end{defn}

\begin{thm}
\label{thm:rpd}Given $k\in rpd\left(\Omega,X,H\right)$, let 
\begin{equation}
K\left(s,t\right)\coloneqq\mathbb{E}\left[k\left(\cdot,s,t\right)\right]=\int_{\Omega}k\left(\omega,s,t\right)d\mathbb{P}\left(\omega\right).\label{eq:b3}
\end{equation}
Then $K\in pd\left(X,H\right)$.

Conversely, for any $K\in pd\left(X,H\right)$, if $K\left(s,s\right)$
is of trace class for all $s\in X$, then there exists an $H$-valued
Gaussian process $\left\{ W_{s}\right\} _{s\in X}$ in $L^{2}\left(\Omega,H\right)$,
such that 
\[
K\left(s,t\right)=\mathbb{E}\left[\left|W_{s}\left(\cdot\right)\left\rangle \right\langle W_{t}\left(\cdot\right)\right|\right]
\]
with $k\left(\omega,s,t\right):=\left|W_{s}\left(\omega\right)\left\rangle \right\langle W_{t}\left(\omega\right)\right|\in rpd\left(\Omega,X,H\right)$.
In particular, this $k$ is pathwise p.d. a.s.
\end{thm}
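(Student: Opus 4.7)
The two halves of the theorem have very different character: the forward implication is essentially a one-line calculation, while the converse is a genuine construction that hinges on the trace-class hypothesis.

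For the forward direction, given $k\in rpd(\Omega,X,H)$ and any finite $\{(a_{i},s_{i})\}_{i=1}^{N}$, I would pass the expectation through the finite double sum by linearity of the (weak or Bochner) integral in \eqref{eq:b3}:
\begin{equation*}
\sum_{i,j=1}^{N}\langle a_{i},K(s_{i},s_{j})a_{j}\rangle \;=\; \mathbb{E}\Bigl[\,\sum_{i,j=1}^{N}\langle a_{i},k(\cdot,s_{i},s_{j})a_{j}\rangle\Bigr] \;\geq\; 0,
\end{equation*}
the inequality being exactly \eqref{eq:b2}. This gives $K\in pd(X,H)$ with no further work.

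For the converse, my plan is to combine a Kolmogorov/Pedrick factorization of $K$ with a Hilbert-Schmidt push-forward of a cylindrical Gaussian. Since $K\in pd(X,H)$, the classical construction (via the scalar-valued kernel $\widetilde{K}$ recalled in the introduction) supplies an auxiliary Hilbert space $\mathcal{K}$ and bounded operators $V_{s}\colon H\to\mathcal{K}$ with $K(s,t)=V_{s}^{*}V_{t}$. The trace-class hypothesis on $K(s,s)=V_{s}^{*}V_{s}$ is precisely the statement that each $V_{s}$ is Hilbert-Schmidt, with $\|V_{s}\|_{HS}^{2}=\mathrm{Tr}\,K(s,s)$. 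This HS property is the decisive input. On an auxiliary probability space $(\Omega,\mathbb{P})$ carrying an i.i.d.\ sequence $\{\xi_{n}\}$ of standard complex Gaussians, and after fixing any orthonormal basis $\{e_{n}\}$ of $\mathcal{K}$, I set
\begin{equation*}
W_{s}\;:=\;\sum_{n}\xi_{n}\,V_{s}^{*}e_{n},
\end{equation*}
the series converging in $L^{2}(\Omega,H)$ because $\sum_{n}\|V_{s}^{*}e_{n}\|^{2}=\|V_{s}^{*}\|_{HS}^{2}<\infty$. A routine orthonormal-expansion computation using $\mathbb{E}[\xi_{n}\overline{\xi_{m}}]=\delta_{nm}$ then yields
\begin{equation*}
\mathbb{E}\bigl[\langle a,W_{s}\rangle\,\langle W_{t},b\rangle\bigr] \;=\; \langle V_{s}a,V_{t}b\rangle_{\mathcal{K}} \;=\; \langle a,K(s,t)b\rangle,
\end{equation*}
which is exactly $K(s,t)=\mathbb{E}[|W_{s}\rangle\langle W_{t}|]$. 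Joint centered Gaussianity of $\{W_{s}\}_{s\in X}$ is automatic, since every $\langle a,W_{s}\rangle$ lies in the closed Gaussian space generated by $\{\xi_{n}\}$. Pathwise positive definiteness of $k(\omega,s,t):=|W_{s}(\omega)\rangle\langle W_{t}(\omega)|$ is then immediate from its rank-one form:
\begin{equation*}
\sum_{i,j}\langle a_{i},k(\omega,s_{i},s_{j})a_{j}\rangle \;=\; \Bigl|\sum_{i}\langle W_{s_{i}}(\omega),a_{i}\rangle\Bigr|^{2} \;\geq\; 0
\end{equation*}
for every $\omega$ in the full-measure set where the defining series converges, and the rpd property now follows by taking expectations and invoking the forward direction already established.

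\textbf{Main obstacle.} The delicate point is the radonification step: the cylindrical Gaussian $\sum_{n}\xi_{n}e_{n}$ does not converge in $\mathcal{K}$ itself when $\dim\mathcal{K}=\infty$, and turning it into an honest $H$-valued random variable through $V_{s}^{*}$ is only possible because $V_{s}^{*}$ is Hilbert-Schmidt, i.e., because $K(s,s)$ is trace class. Without this hypothesis, the same formal expression defines at best a cylindrical $H$-valued process, with no bona fide $W_{s}\in L^{2}(\Omega,H)$, which is the distinction the introduction alludes to when it mentions non-trace-class phenomena and cylindrical realizations.
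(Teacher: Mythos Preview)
Your proposal is correct and follows essentially the same route as the paper: the forward direction is the same linearity-of-expectation computation, and for the converse both you and the paper take a Kolmogorov factorization $K(s,t)=V_{s}^{*}V_{t}$, expand $W_{s}=\sum_{n}\xi_{n}V_{s}^{*}e_{n}$ against an orthonormal basis of the dilation space with i.i.d.\ Gaussian coefficients, use the trace-class/Hilbert--Schmidt hypothesis to get $L^{2}(\Omega,H)$-convergence, and verify the covariance and pathwise rank-one positivity directly. The only cosmetic difference is that the paper uses real $N(0,1)$ variables while you use standard complex Gaussians; both work, and your identification of the radonification issue matches the paper's subsequent remark on the non-trace-class case.
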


\begin{proof}
Given $k\in rpd\left(\Omega,X,H\right)$, it is immediate that the
mean kernel from \eqref{eq:b3} belongs to $pd\left(X,H\right)$,
since 
\begin{align*}
\sum^{N}_{i,j=1}\left\langle a_{i},K\left(s_{i},s_{j}\right)a_{j}\right\rangle  & =\sum^{N}_{i,j=1}\left\langle a_{i},\mathbb{E}\left[k\left(\cdot,s_{i},s_{j}\right)\right]a_{j}\right\rangle \\
 & =\mathbb{E}\left[\sum^{N}_{i,j=1}\left\langle a_{i},k\left(\cdot,s_{i},s_{j}\right)a_{j}\right\rangle \right]\geq0,
\end{align*}
valid for all $\left(a_{i}\right)^{N}_{i=1}$ in $H$, $\left(s_{i}\right)^{N}_{i=1}$
in $X$, and all $N\in\mathbb{N}$. 

Conversely, for $K\in pd\left(X,H\right)$ let $K\left(s,t\right)=V^{*}_{s}V_{t}$
be its Kolmogorov decomposition, where $V_{s}\colon H\rightarrow H'$
is an operator from $H$ into a dilation space $H'$. Choose an orthonormal
basis $\left\{ e_{i}\right\} $ in $H'$, and let 
\[
W_{s}=\sum_{i}\left(V^{*}_{s}e_{i}\right)Z_{i}.
\]
Here, $Z_{i}$ is a system of i.i.d. random variables realized on
a probability space $\left(\Omega,\mathbb{P}\right)$, with $Z_{i}\sim N\left(0,1\right)$,
standard Gaussian. Then the series converges in $L^{2}\left(\Omega,H\right)$,
since 
\begin{align*}
\mathbb{E}\left[\left\Vert W_{s}\right\Vert ^{2}\right] & =\sum_{i,j}\left\langle V^{*}_{s}e_{i},V^{*}_{s}e_{j}\right\rangle \mathbb{E}\left[Z_{i}Z_{j}\right]\\
 & =\sum_{i}\left\Vert V^{*}_{s}e_{i}\right\Vert ^{2}=\sum\left\langle e_{i},V_{s}V^{*}_{s}e_{i}\right\rangle \\
 & =tr\left(V_{s}V^{*}_{s}\right)=tr\left(V^{*}_{s}V_{s}\right)=tr\left(K\left(s,s\right)\right)<\infty,\quad s\in X.
\end{align*}
Thus, $\left\{ W_{s}\right\} _{s\in X}$ is a well defined $H$-valued
Gaussian process, with 
\[
\left\langle W_{s},a\right\rangle =\sum_{i}\left\langle V^{*}_{s}e_{i},a\right\rangle Z_{i},\quad s\in X,\:a\in H.
\]
Its covariance is indeed $K$, since for all $a,b\in H$, 
\begin{align*}
\mathbb{E}\left[\left\langle a,W_{s}\right\rangle \left\langle W_{t},b\right\rangle \right] & =\sum_{i,j}\left\langle a,V^{*}_{s}e_{i}\right\rangle \left\langle V^{*}_{t}e_{j},b\right\rangle \mathbb{E}\left[Z_{i}Z_{j}\right]\\
 & =\sum_{i}\left\langle a,V^{*}_{s}e_{i}\right\rangle \left\langle V^{*}_{t}e_{i},b\right\rangle \\
 & =\sum_{i}\left\langle V_{s}a,e_{i}\right\rangle \left\langle e_{i},V_{t}b\right\rangle \\
 & =\left\langle V_{s}a,V_{t}b\right\rangle =\left\langle a,V^{*}_{s}V_{t}b\right\rangle =\left\langle a,K\left(s,t\right)b\right\rangle .
\end{align*}
This implies that 
\[
K\left(s,t\right)=\mathbb{E}\left(\left|W_{s}\left\rangle \right\langle W_{t}\right|\right)=\int_{\Omega}\left|W_{s}\left\rangle \right\langle W_{t}\right|d\mathbb{P}.
\]
Define the random p.d. kernel by 
\[
k\left(\omega,s,t\right)\coloneqq\left|W_{s}\left(\omega\right)\left\rangle \right\langle W_{t}\left(\omega\right)\right|\in rpd\left(\Omega,X,H\right).
\]
Clearly, $k\left(\omega,s,t\right)$ is also pathwise p.d. a.s. 
\end{proof}
A key assumption in the converse direction of \prettyref{thm:rpd}
is that 
\begin{equation}
tr\left(K\left(s,s\right)\right)<\infty,\quad s\in X,\label{eq:b4}
\end{equation}
or equivalently, in the Kolmogorov factorization $K\left(s,t\right)=V^{*}_{s}V_{t}$,
the operator $V_{s}$ is Hilbert-Schmidt (HS) for all $s\in X$. This
condition is also necessary: 
\begin{cor}
Let $K\in pd\left(X,H\right)$. There exists an $H$-valued Gaussian
process $\left\{ W_{s}\right\} _{s\in X}$ realized on some probability
space $\left(\Omega,\mathbb{P}\right)$ with 
\[
K\left(s,t\right)=\mathbb{E}\left[\left|W_{s}\left\rangle \right\langle W_{t}\right|\right],\quad s,t\in X
\]
if and only if 
\[
tr\left(K\left(s,s\right)\right)<\infty,\quad\forall s\in X.
\]
\end{cor}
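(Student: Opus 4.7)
The ``if'' direction is already contained in Theorem~\ref{thm:rpd}: starting from the Kolmogorov factorization $K(s,t) = V_s^* V_t$, the explicit Gaussian series $W_s = \sum_i (V_s^* e_i) Z_i$ converges in $L^2(\Omega, H)$ precisely because $\operatorname{tr}(K(s,s)) < \infty$, and that construction realizes $K$ as the stated covariance. So the only work is the ``only if'' direction.

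For the converse I would assume that $\{W_s\}_{s \in X}$ is an $H$-valued Gaussian process on some $(\Omega, \mathbb{P})$, with
\[
K(s,t) = \mathbb{E}\bigl[|W_s\rangle\langle W_t|\bigr].
\]
First I would observe that the rank-one operator $|W_s\rangle\langle W_s|$ being Bochner integrable on $\Omega$ with values in $\mathcal{L}(H)$ already forces $\mathbb{E}\|W_s\|^2 < \infty$; if only pointwise $H$-valuedness is assumed, the same conclusion follows from Fernique's theorem applied to the Gaussian vector $W_s$. Then, fixing an orthonormal basis $\{f_j\}$ of $H$ and using that $K(s,s)$ is a positive operator, I would compute
\[
\operatorname{tr}(K(s,s)) = \sum_j \bigl\langle f_j, K(s,s) f_j\bigr\rangle = \sum_j \mathbb{E}\bigl[|\langle W_s, f_j\rangle|^2\bigr],
\]
interchange sum and expectation by Tonelli (all terms are nonnegative), and conclude by Parseval's identity that this equals $\mathbb{E}\|W_s\|^2 < \infty$.

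The only nontrivial point in this argument is securing $\mathbb{E}\|W_s\|^2 < \infty$ from the bare hypothesis that $W_s$ is an $H$-valued Gaussian vector; this is exactly where one invokes Fernique (or the integrability already built into the formulation of the covariance $\mathbb{E}[|W_s\rangle\langle W_s|]$ as a Bochner integral in $\mathcal{L}(H)$). Once this is in hand, the identity $\operatorname{tr}(K(s,s)) = \mathbb{E}\|W_s\|^2$ via Parseval plus Tonelli is a routine interchange, and combining the two directions completes the equivalence.
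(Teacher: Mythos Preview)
Your proposal is correct and follows essentially the same route as the paper: both directions use \prettyref{thm:rpd} for sufficiency and the Parseval/Tonelli identity $\operatorname{tr}(K(s,s))=\sum_j\mathbb{E}|\langle W_s,f_j\rangle|^2=\mathbb{E}\|W_s\|^2$ for necessity. The only difference is that where you invoke Fernique (or Bochner integrability) to secure $\mathbb{E}\|W_s\|^2<\infty$, the paper simply takes $\{W_s\}\subset L^2(\Omega,H)$ as part of the definition of an $H$-valued Gaussian process and reads off the finiteness directly.
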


\begin{proof}
One needs only to check that, if $K\left(s,t\right)=\mathbb{E}\left[\left|W_{s}\left\rangle \right\langle W_{t}\right|\right]$,
then 
\begin{align*}
tr\left(K\left(s,s\right)\right) & =\sum\nolimits_{i}\left\langle e_{i},K\left(s,s\right)e_{i}\right\rangle \\
 & =\mathbb{E}\left[\sum\nolimits_{i}\left|\left\langle W_{s},e_{i}\right\rangle \right|^{2}\right]=\mathbb{E}\left[\left\Vert W_{s}\right\Vert ^{2}\right]<\infty
\end{align*}
since $\left\{ W_{s}\right\} _{s\in X}\subset L^{2}\left(\Omega,H\right)$
by definition. Here, $\left\{ e_{i}\right\} $ is any ONB in $H$. 
\end{proof}
\begin{rem}
Consequently, if the trace/HS condition \eqref{eq:b4} is dropped,
there is no $H$-valued Gaussian process $\left\{ W_{s}\right\} _{s\in X}$
with covariance $K\left(s,t\right)$.

Nevertheless, fix an isonormal Gaussian process $G$ on $H'$. For
each $s\in X$ define the cylindrical Gaussian 
\[
W^{\circ}_{s}\left(a\right)=G\left(V_{s}a\right),\quad a\in H.
\]
Then for all $a,b\in H$, 
\[
\mathbb{E}\left[\overline{W^{\circ}_{s}\left(a\right)}W^{\circ}_{t}\left(b\right)\right]=\left\langle V_{s}a,V_{t}b\right\rangle =\left\langle a,K\left(s,t\right)b\right\rangle ,
\]
and for any finite $\left\{ \left(s_{i},a_{i}\right)\right\} ^{N}_{i=1}$,
\[
\sum^{N}_{i,j=1}\overline{W^{\circ}_{s_{i}}\left(a_{i}\right)}W^{\circ}_{s_{j}}\left(a_{j}\right)=\left|\sum^{N}_{i=1}G\left(V_{s_{i}}a\right)\right|^{2}\geq0\quad a.s.
\]
Thus the random kernel defined as a sesquilinear form 
\[
\left\langle a,k\left(\omega,s,t\right)b\right\rangle \coloneqq\overline{W^{\circ}_{s}\left(a\right)\left(\omega\right)}W^{\circ}_{s}\left(b\right)\left(\omega\right)
\]
is pathwise positive definite and satisfies $\mathbb{E}\left[k\left(\cdot,s,t\right)\right]=K\left(s,t\right)$.
In general, however, $k\left(\omega,s,t\right)$ need not correspond
to a bounded operator on $H$; it is merely a random positive definite
form. This is precisely the failure of radonification (see e.g., \cite{MR1930962,MR4595378})
in the non-trace-class case.
\end{rem}

\begin{rem}
A useful approximation scheme is obtained by finite-rank truncation.
Let $\left\{ e_{i}\right\} _{i\in I}$ be an orthonormal basis (possibly
uncountable) of $H'$. For each finite $F\subset I$, let $P_{F}=\sum_{i\in F}\left|e_{i}\left\rangle \right\langle e_{i}\right|$
be the projection onto $span\left\{ e_{i}:i\in F\right\} $, and define
\[
W^{\left(F\right)}_{s}\coloneqq\sum_{i\in F}\left(V^{*}_{s}e_{i}\right)Z_{i}\in L^{2}\left(\Omega,H\right),\quad k_{F}\left(\omega,s,t\right)\coloneqq\left|W^{\left(F\right)}_{s}\left(\omega\right)\left\rangle \right\langle W^{\left(F\right)}_{t}\left(\omega\right)\right|.
\]
Then $W^{\left(F\right)}_{s}$ is $H$-valued Gaussian with 
\[
\mathbb{E}\left[\left\langle a,k_{F}\left(\cdot,s,t\right)b\right\rangle \right]=\sum_{i\in F}\left\langle a,V^{*}_{s}e_{i}\right\rangle \left\langle V^{*}_{t}e_{i},b\right\rangle =\left\langle a,V^{*}_{s}P_{F}V_{t}b\right\rangle ,
\]
so 
\[
\mathbb{E}\left[k_{F}\left(\cdot,s,t\right)\right]=V^{*}_{s}P_{F}V_{t}\rightarrow K\left(s,t\right),\;\text{as }F\rightarrow I
\]
(directed by inclusion) in the weak operator topology. Moreover, 
\[
\sum_{i,j}\left\langle a_{i},k_{F}\left(\omega,s_{i},s_{j}\right)a_{j}\right\rangle =\left|\sum_{i}\left\langle W^{\left(F\right)}_{s}\left(\omega\right),a_{i}\right\rangle \right|^{2}\geq0,\;a.s.
\]
Note that 
\[
\mathbb{E}\left\Vert W^{\left(F\right)}_{s}\right\Vert ^{2}=tr\left(V^{*}_{s}P_{F}V_{s}\right)\rightarrow tr\left(V^{*}_{s}V_{s}\right)=tr\left(K\left(s,s\right)\right)=\infty,\quad F\rightarrow I.
\]
so $\{W^{\left(n\right)}_{s}\}_{F}$ (fixed $s$) has no $H$-valued
limit in $L^{2}\left(\Omega,H\right)$.
\end{rem}

\subsection{The class $rpd\left(\Omega,X,H\right)$}

Condition \eqref{eq:b2} is much weaker than pathwise positive definiteness
because it constrains only the average of the Gram forms, not each
realization: for a given finite set $\left\{ \left(s_{i},a_{i}\right)\right\} $,
the random matrix $\left[\left\langle a_{i},k\left(\cdot,s_{i},s_{j}\right)a_{j}\right\rangle \right]_{i,j}$
may be indefinite with positive probability (e.g. add a mean-zero
indefinite perturbation), but its expectation remains positive definite. 

Both $rpd\left(\Omega,X,H\right)$ and $pd\left(X,H\right)$ are convex
cones, and there is a natural containment 
\[
pd\left(X,H\right)\subset rpd\left(\Omega,X,H\right)
\]
by identifying any deterministic $K$ with the constant random kernel,
i.e., $k\left(\omega,s,t\right)\equiv K\left(s,t\right)$. In fact,
$rpd\left(\Omega,X,H\right)$ can be seen as a sampling stable enlargement
of $pd\left(X,H\right)$ in the sense that empirical averages of i.i.d.
samples from a random kernel remain in the class and converge to the
mean kernel. This is made precise by the following proposition.
\begin{prop}
\label{prop:6}Take $k\in rpd\left(\Omega,X,H\right)$. Let $k^{\left(1\right)},\dots,k^{\left(m\right)}$
be i.i.d. copies of $k$ defined on the product space $\left(\Omega^{m},\mathbb{P}^{\otimes m}\right)$.
Define the empirical kernel 
\[
\overline{k}_{m}\left(\omega_{1},\dots,\omega_{m},s,t\right)\coloneqq\frac{1}{m}\sum^{m}_{r=1}k^{\left(r\right)}\left(\omega_{r},s,t\right).
\]
Then $\overline{k}_{m}\in rpd\left(\Omega^{m},X,H\right)$, and $\mathbb{E}\overline{k}_{m}=K$.
Moreover, 
\[
\overline{k}_{m}\left(\cdot,s,t\right)\xrightarrow[a.s.]{\;m\rightarrow\infty\;}K\left(s,t\right)
\]
in the weak operator topology.
\end{prop}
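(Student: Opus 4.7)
The plan is to handle the three claims by, in order, linearity-of-expectation bookkeeping, identifying the mean via i.i.d.\ copies, and reducing the operator-valued convergence to the scalar strong law of large numbers.

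First I would check $\overline k_m\in rpd(\Omega^m,X,H)$. Strong measurability of $\overline k_m(\cdot,s,t)$ on $\Omega^m$ is immediate: each $k^{(r)}$ is strongly measurable in its own coordinate, and finite sums of strongly measurable maps are strongly measurable. For positivity in expectation, given any finite $\{(s_i,a_i)\}_{i=1}^N$,
\[
\mathbb{E}\Big[\sum_{i,j}\langle a_i,\overline k_m(\cdot,s_i,s_j)a_j\rangle\Big]
=\frac1m\sum_{r=1}^m\mathbb{E}\Big[\sum_{i,j}\langle a_i,k^{(r)}(\cdot,s_i,s_j)a_j\rangle\Big]\geq0,
\]
where each summand is nonnegative because $k^{(r)}$ has the same law as $k\in rpd(\Omega,X,H)$. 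The same linearity together with $\mathbb{E}[k^{(r)}(\cdot,s,t)]=K(s,t)$ yields $\mathbb{E}\overline k_m=K$.

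For the a.s.\ convergence in WOT, I would fix $s,t\in X$ and $a,b\in H$ and set
\[
\xi_r(\omega_r)\coloneqq\langle a,k^{(r)}(\omega_r,s,t)b\rangle,\quad r\in\mathbb{N}.
\]
These are i.i.d.\ scalar random variables with common mean $\langle a,K(s,t)b\rangle$; integrability is supplied by $|\xi_r|\leq\|a\|\,\|b\|\,\|k^{(r)}(\cdot,s,t)\|_{\mathcal L(H)}$ together with the Bochner integrability of $k(\cdot,s,t)$ implicit in the definition of $K=\mathbb{E}[k]$. Kolmogorov's SLLN then gives
\[
\langle a,\overline k_m(\cdot,s,t)b\rangle=\frac1m\sum_{r=1}^m\xi_r\xrightarrow[a.s.]{m\to\infty}\langle a,K(s,t)b\rangle.
\]

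The subtle point I expect to flag is that the exceptional null set produced by SLLN depends on the pair $(a,b)$, whereas WOT convergence of an operator-valued sequence asks for a single full-measure event (for each fixed $s,t$) on which the scalar convergence holds for every $a,b\in H$. When $H$ is separable one intersects the countably many null sets obtained by running SLLN along a countable dense $D\subset H$ and extends by density, provided an $\omega$-wise bound on $\|\overline k_m(\omega,s,t)\|_{\mathcal L(H)}$ is available; otherwise the conclusion should be read pointwise in $(a,b)$, which is exactly what the scalar SLLN delivers.
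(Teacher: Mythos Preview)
Your proof is correct and follows essentially the same route as the paper's: linearity of expectation for the $rpd$ check and $\mathbb{E}\overline{k}_m=K$, then the scalar SLLN applied to $\langle a,\overline{k}_m(\cdot,s,t)b\rangle$ for fixed $a,b$. You are more careful than the paper about strong measurability, integrability, and the dependence of the SLLN null set on $(a,b)$; the paper simply invokes the SLLN pointwise in $a,b$ without comment, so your reading of the conclusion as pointwise in $(a,b)$ is exactly what the paper intends.
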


\begin{proof}
For any finite data $\left\{ \left(s_{i},a_{i}\right)\right\} ^{N}_{i=1}$
in $X\times H$, 
\[
\mathbb{E}\left[\sum^{N}_{i,j=1}\left\langle a_{i},\overline{k}_{m}\left(\cdot,s_{i},s_{j}\right)a_{j}\right\rangle \right]=\frac{1}{m}\sum^{m}_{r=1}\mathbb{E}\left[\sum^{N}_{i,j=1}\left\langle a_{i},k^{\left(r\right)}\left(\cdot,s_{i},s_{j}\right)a_{j}\right\rangle \right]\geq0,
\]
since each $k^{\left(r\right)}$ is a copy of $k\in rpd\left(\Omega,X,H\right)$.
Thus $\overline{k}_{m}\in rpd\left(\Omega^{m},X,H\right)$, and $\mathbb{E}\left[\overline{k}_{m}\right]=K$
by i.i.d. and linearity.

On the other hand, for all $a,b\in H$, 
\[
\left\langle a,\overline{k}_{m}\left(\cdot,s,t\right)b\right\rangle =\frac{1}{m}\sum^{m}_{r=1}\left\langle a,k^{\left(r\right)}\left(\cdot,s,t\right)b\right\rangle \xrightarrow[a.s.]{\;m\rightarrow\infty\;}\left\langle a,K\left(s,t\right)b\right\rangle 
\]
by the strong law of large numbers. 
\end{proof}

\section{Moment dilation}\label{sec:3}

A central question in the mean‐square dilation framework is when the
sequence of mixed moments
\begin{equation}
K\left(m,n\right)\coloneqq\mathbb{E}\left[A^{*m}A^{n}\right],\quad m,n\in\mathbb{N}_{0}=\left\{ 0,1,2,\dots\right\} ,\label{eq:c1}
\end{equation}
arising from a random operator $A\colon\Omega\rightarrow\mathcal{L}\left(H\right)$,
can be represented as the compression of powers of a single unitary
operator on a larger Hilbert space. Such a representation preserves
the entire moment structure of $A$ in a “unitary model” and generalizes
the classical Sz.-Nagy power dilation from deterministic contractions
to the random, mean‐square setting. This section introduces the notion
of moment dilation, characterizes its existence, and relates them
to positivity certain Toeplitz‐type inequalities satisfied by $K\left(m,n\right)$.
\begin{defn}
\label{def:7}Let $\left(\Omega,\mathbb{\mathbb{P}}\right)$ be a
Borel probability space. Consider random operators $A\colon\Omega\rightarrow\mathcal{L}\left(H\right)$
that are strongly measurable. An operator $A$ is said to admit a
moment dilation if there exist an isometry $W\colon H\rightarrow\mathcal{K}$,
a unitary $U\in\mathcal{L}\left(\mathcal{K}\right)$, and a selfadjoint
projection $P\in\mathcal{L}\left(\mathcal{K}\right)$, such that its
moment kernel has a Kolmogorov/GNS type factorization of the form:
\begin{equation}
\mathbb{E}\left[A^{*m}A^{n}\right]=W^{*}U^{*m}PU^{n}W\label{eq:c2}
\end{equation}
for all $m,n\in\mathbb{N}_{0}$. In particular, 
\[
\mathbb{E}\left[A^{n}\right]=W^{*}PU^{n}W
\]
for all $n\in\mathbb{N}_{0}$.
\end{defn}

The projection $\ensuremath{P}$ accounts for the fact that, in the
random setting, the forward and backward moment sequences $\mathbb{E}[A^{*m}A^{n}]$
may not arise from the compression of a unitary acting on a single
embedded copy of $H$. Instead, $P$ specifies the subspace of the
dilation space $\mathcal{K}$ on which the compressed unitary action
reproduces the mean-square moments. In the deterministic case, where
$A$ is a contraction on $H$ with no randomness, the identity 
\[
A^{*m}A^{n}=W^{*}U^{*m}U^{n}W
\]
is recovered with $P=I_{\mathcal{K}}$, reducing to the classical
Sz.-Nagy power dilation theorem. Thus, $P$ can be seen as a ``filter''
through which the unitary dynamics are observed in order to match
the statistical second-moment structure of $A$.

In this formulation, the dilation is designed to reproduce the entire
matrix of mixed second moments
\[
\mathbb{E}\left[A^{*m}A^{n}\right],\quad m,n\in\mathbb{N}_{0},
\]
not just the first moment $\mathbb{E}\left[A\right]$. These quantities
determine, for each pair of integers $m,n$ the expected inner products
\[
\left\langle a,\mathbb{E}\left[A^{*m}A^{n}\right]b\right\rangle =\mathbb{E}\left[\left\langle A^{m}a,A^{n}b\right\rangle \right],\quad a,b\in H
\]
and hence encode the full second-order correlation structure of the
sequence $\left(A^{n}\right)_{n\geq0}$ in the mean square sense. 

This is statistical second-moment structure in the Hilbert space sense:
the data $\mathbb{E}\left[A^{*m}A^{n}\right]$ controls all quadratic
forms in the input vectors but does not determine the full law of
the random operator $A\left(\omega\right)$. In particular, higher-order
cumulants are not captured. Nonetheless, knowing the entire family
$\mathbb{E}\left[A^{*m}A^{n}\right]$ is strictly stronger than knowing
only $\mathbb{E}\left[A\right]$, and retains essential information
about fluctuations that would be lost if one tried to dilate $\mathbb{E}\left[A\right]$
alone. 

Below we characterize when such a moment dilation exists.
\begin{thm}
\label{thm:8}Let $A\colon\Omega\rightarrow\mathcal{L}\left(H\right)$
be a random operator with moment kernel $K$ as in \eqref{eq:c1},
and set 
\[
K_{shift}\left(m,n\right)\coloneqq K\left(m+1,n+1\right)
\]
for all $m,n\in\mathbb{N}_{0}$.

The following are equivalent:
\begin{enumerate}
\item \label{enu:8a}There exists a moment dilation $\left(\mathcal{K},U,P,W\right)$
(see \prettyref{def:7}) with 
\[
K\left(m,n\right)=W^{*}U^{*m}PU^{n}W,
\]
and 
\begin{equation}
U^{*}PU\leq P\;\text{on }\overline{span}\left\{ U^{k}WH:k\in\mathbb{N}_{0}\right\} .\label{eq:c3}
\end{equation}
\item \label{enu:8b}$K_{shift}\leq K$, i.e., $K-K_{shift}$ is p.d.
\end{enumerate}
In particular, if $K_{shift}\leq K$, the operator $A$ admits a moment
dilation in the sense of \prettyref{eq:c2} subject to the constraint
\eqref{eq:c3}.

\end{thm}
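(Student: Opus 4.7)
The plan is to treat the two implications separately. The direction (1)$\Rightarrow$(2) reduces to a direct computation with the given factorization, while (2)$\Rightarrow$(1) is built from a Kolmogorov decomposition of the moment kernel $K$ combined with Sz.-Nagy's unitary power dilation applied to a naturally arising contraction.

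For (1)$\Rightarrow$(2), I would substitute \eqref{eq:c2} into the difference $K-K_{shift}$ to obtain
\[
K\left(m,n\right)-K_{shift}\left(m,n\right)=W^{*}U^{*m}\bigl(P-U^{*}PU\bigr)U^{n}W.
\]
For any finite data $\left\{ \left(n,a_{n}\right)\right\} $ in $\mathbb{N}_{0}\times H$, setting $\xi=\sum_{n}U^{n}Wa_{n}$ the Gram sum collapses to $\left\langle \xi,\left(P-U^{*}PU\right)\xi\right\rangle $. Since $\xi\in\overline{span}\left\{ U^{k}WH:k\in\mathbb{N}_{0}\right\} $, the constraint \eqref{eq:c3} delivers $\geq0$, so $K-K_{shift}$ is p.d.

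For (2)$\Rightarrow$(1), observe first that $K\left(m,n\right)=\mathbb{E}\left[A^{*m}A^{n}\right]$ is itself an $\mathcal{L}\left(H\right)$-valued p.d. kernel on $\mathbb{N}_{0}\times\mathbb{N}_{0}$, since $\sum_{m,n}\left\langle a_{m},K\left(m,n\right)a_{n}\right\rangle =\mathbb{E}\bigl\Vert \sum_{n}A^{n}a_{n}\bigr\Vert ^{2}\geq0$ (a special case of \prettyref{thm:rpd}). Take a Kolmogorov decomposition $K\left(m,n\right)=V^{*}_{m}V_{n}$ with $V_{n}\colon H\rightarrow H'$ and $H'=\overline{span}\left\{ V_{n}a:n\in\mathbb{N}_{0},\,a\in H\right\} $. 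The hypothesis $K_{shift}\leq K$ unwinds into $\bigl\Vert \sum_{n}V_{n+1}a_{n}\bigr\Vert ^{2}\leq\bigl\Vert \sum_{n}V_{n}a_{n}\bigr\Vert ^{2}$, so the prescription $S\colon V_{n}a\mapsto V_{n+1}a$ is well-defined and extends uniquely to a contraction $S\in\mathcal{L}\left(H'\right)$. Applying Sz.-Nagy's unitary power dilation produces a Hilbert space $\mathcal{K}\supset H'$ and a unitary $U\in\mathcal{L}\left(\mathcal{K}\right)$ with $S^{n}=P_{H'}U^{n}|_{H'}$ for all $n\geq0$. Set $W=V_{0}$ and $P=P_{H'}$; $W$ is an isometry because $K\left(0,0\right)=\mathbb{E}\left[I_{H}\right]=I_{H}=V^{*}_{0}V_{0}$. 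Using $PU^{n}Wa=S^{n}V_{0}a=V_{n}a$ together with the projection identity $\left\langle Px,Py\right\rangle =\left\langle Px,y\right\rangle $, a short calculation gives $W^{*}U^{*m}PU^{n}W=V^{*}_{m}V_{n}=K\left(m,n\right)$. Finally, for $\xi=\sum_{k}U^{k}Wa_{k}$ one has $PU\xi=\sum_{k}V_{k+1}a_{k}$ and $P\xi=\sum_{k}V_{k}a_{k}$, so $\left\Vert PU\xi\right\Vert ^{2}\leq\left\Vert P\xi\right\Vert ^{2}$ is precisely the hypothesis, yielding \eqref{eq:c3}.

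The main obstacle, although small, is the well-definedness of the shift $S$: one must verify that $\sum_{n}V_{n}a_{n}=0$ forces $\sum_{n}V_{n+1}a_{n}=0$. This is immediate from the norm bound extracted from $K_{shift}\leq K$, and it is the key step that converts the positivity hypothesis into an honest contraction to which Sz.-Nagy's dilation applies; the remaining verifications are essentially bookkeeping with the power dilation identity $S^{n}=P_{H'}U^{n}|_{H'}$.
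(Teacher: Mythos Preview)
Your proof is correct and follows essentially the same route as the paper: both directions match the paper's argument, with the (2)$\Rightarrow$(1) construction proceeding via a Kolmogorov factorization $K(m,n)=V_m^{*}V_n$, the shift $V_n a\mapsto V_{n+1}a$ shown to be a contraction from the hypothesis $K_{shift}\le K$, and Sz.-Nagy's unitary power dilation applied to that contraction. The only differences are cosmetic---the paper writes the dilation as $B^{n}=J^{*}U^{n}J$ with an isometry $J\colon H'\to\mathcal{K}$ and sets $W=JV_{0}$, $P=JJ^{*}$, whereas you use the subspace picture $S^{n}=P_{H'}U^{n}|_{H'}$ with $W=V_{0}$, $P=P_{H'}$---and your explicit justification that $W$ is an isometry (via $K(0,0)=I_{H}$) is a point the paper asserts without comment.
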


\begin{proof}
The argument follows the classical dilation machinery, adapted to
the mean-square setting.

First, let $K\colon\mathbb{N}_{0}\times\mathbb{N}_{0}\rightarrow\mathbb{C}$
be the $\mathcal{L}\left(H\right)$-valued moment kernel from \eqref{eq:c1},
i.e., 
\[
K\left(m,n\right)\coloneqq\mathbb{E}\left[A^{*m}A^{n}\right].
\]
For all finite $\left\{ \left(m_{i},a_{i}\right)\right\} ^{N}_{i=1}$
in $\mathbb{N}_{0}\times H$, 
\begin{align*}
\sum\nolimits^{N}_{i,j=1}\left\langle a_{i},K\left(m_{i},m_{j}\right)a_{j}\right\rangle  & =\mathbb{E}\left[\sum\nolimits^{N}_{i,j=1}\left\langle A^{m_{i}}a_{i},A^{m_{j}}a_{j}\right\rangle \right]\\
 & =\mathbb{E}\left[\left\Vert \sum\nolimits^{N}_{i=1}A^{m_{i}}a_{i}\right\Vert ^{2}\right]\geq0,
\end{align*}
so $K$ is positive definite. 

\eqref{enu:8a}$\Longrightarrow$\eqref{enu:8b} Suppose
\[
K\left(m,n\right)=\mathbb{E}\left[A^{*m}A^{n}\right]=W^{*}U^{*m}PU^{n}W
\]
satisfying \eqref{eq:c3}. For any finite $\left\{ \left(m_{i},a_{i}\right)\right\} $
in $\mathbb{N}_{0}\times H$, 
\begin{align*}
\sum\nolimits_{i,j}\left\langle a_{i},K_{shift}\left(m_{i},m_{j}\right)a_{j}\right\rangle  & =\sum\nolimits_{i,j}\left\langle a_{i},K\left(m_{i}+1,m_{j}+1\right)a_{j}\right\rangle \\
 & =\sum\nolimits_{i,j}\left\langle U^{m_{i}+1}Wa_{i},PU^{m_{j}+1}Wa_{j}\right\rangle \\
 & =\left\Vert \sum\nolimits_{i}PU^{m_{i}+1}Wa_{i}\right\Vert ^{2}=\left\Vert PUy\right\Vert ^{2}
\end{align*}
where 
\[
y:=\sum\nolimits_{i}U^{m_{i}}Wa_{i}\in\overline{span}\left\{ U^{k}WH:k\in\mathbb{N}_{0}\right\} .
\]
Similarly, 
\[
\sum\nolimits_{i,j}\left\langle a_{i},K\left(m_{i},m_{j}\right)a_{j}\right\rangle =\left\Vert \sum\nolimits_{i}PU^{m_{i}}Wa_{i}\right\Vert ^{2}=\left\Vert Py\right\Vert ^{2}.
\]
By \eqref{eq:c3}, 
\[
\left\Vert PUy\right\Vert ^{2}=\left\langle y,U^{*}PUy\right\rangle \leq\left\langle y,Py\right\rangle =\left\Vert Py\right\Vert ^{2}.
\]
Therefore, $K_{shift}\leq K$.

\eqref{enu:8b}$\Longrightarrow$\eqref{enu:8a} Here, we construct
the data $\left(\mathcal{K},U,P,W\right)$, assuming $K_{shift}\leq K$. 

By the Kolmogorov/GNS factorization, 
\[
K\left(m,n\right)=V^{*}_{m}V_{n},\quad m,n\in\mathbb{N}_{0},
\]
where $V_{n}\colon H\rightarrow H'$ maps $H$ into a larger space
$H'$, and $span\left\{ V_{n}a:a\in H,n\in\mathbb{N}_{0}\right\} $
is dense in $H'$. (One may choose $H'$ to be the RKHS of the associated
scalar-valued kernel $\tilde{K}$, as discussed in \prettyref{sec:1}.) 

Define a shift operator $B$ on $H'$ by $BV_{n}a=V_{n+1}a$, and
extend linearly. Note that 
\begin{align*}
\left\Vert B\left(\sum\nolimits_{i}V_{i}a_{i}\right)\right\Vert ^{2} & =\left\Vert \sum\nolimits_{i}V_{i+1}a_{i}\right\Vert ^{2}\\
 & =\sum\nolimits_{i,j}\left\langle a_{i},V^{*}_{i+1}V_{j+1}a_{j}\right\rangle \\
 & =\sum\nolimits_{i,j}\left\langle a_{i},K_{shift}\left(i,j\right)a_{j}\right\rangle \\
 & \leq\sum\nolimits_{i,j}\left\langle a_{i},K\left(i,j\right)a_{j}\right\rangle =\left\Vert \sum\nolimits_{i}V_{i}a_{i}\right\Vert ^{2}
\end{align*}
by the assumption $K_{shift}\leq K$. Thus, $B$ extends to a contraction
on $H'$ by density, i.e., $\left\Vert B\right\Vert \leq1$. 

We then apply the Sz.-Nagy dilation theorem to $B$. (Pedrick's scalar-valued
kernel trick \cite{MR2938971} works here, too; see e.g., \cite[sect. 2.1]{MR4760560}.)
There exists an isometry $J\colon H'\rightarrow\mathcal{K}$, a unitary
$U$ on $\mathscr{K}$, such that 
\[
B^{n}=J^{*}U^{n}J,\quad n\in\mathbb{N}_{0}.
\]
Then, $K$ factors into 
\begin{align*}
K\left(m,n\right) & =V^{*}_{m}V_{n}=V^{*}_{0}B^{*m}B^{n}V_{0}\\
 & =V^{*}_{0}\left(J^{*}U^{*m}J\right)\left(J^{*}U^{n}J\right)V_{0}\\
 & =\left(V^{*}_{0}J^{*}\right)U^{*m}\left(JJ^{*}\right)U^{n}\left(JV_{0}\right)\\
 & =W^{*}U^{*m}PU^{n}W
\end{align*}
with $W=JV_{0}\colon H\rightarrow\mathcal{K}$ isometric, $P=JJ^{*}$. 

Moreover, using the argument in the first part of the proof, it is
clear that 
\[
K_{shift}\leq K\Longleftrightarrow U^{*}PU\leq P\:\text{on }\overline{span}\left\{ U^{k}WH\right\} \Longleftrightarrow\left\Vert B\right\Vert \leq1.
\]
This completes the proof.
\end{proof}
\begin{cor}
$P=I_{\mathcal{K}}$ if and only if $K_{shift}=K$. That is, the operator
$A$ admits a moment dilation of the form 
\[
K\left(m,n\right)=\mathbb{E}\left[A^{*m}A^{n}\right]=W^{*}U^{*m}U^{n}W
\]
if and only if $K$ is shift-invariant (stationary). 
\end{cor}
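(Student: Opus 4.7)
My plan is to handle the two implications separately: the ``only if'' direction will reduce to a one-line computation using unitarity of $U$, while the ``if'' direction will require constructing a moment dilation tailored to the stationarity hypothesis so that the projection can actually be taken to be the identity.

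For the forward implication, I would assume a moment dilation exists with $P=I_{\mathcal{K}}$ and simply compute
\[
K_{shift}(m,n)=W^{*}U^{*(m+1)}U^{n+1}W=W^{*}U^{*m}(U^{*}U)U^{n}W=W^{*}U^{*m}U^{n}W=K(m,n),
\]
using only $U^{*}U=I_{\mathcal{K}}$. No further input is needed here beyond \prettyref{def:7}.

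For the converse, my approach is to exploit the fact that $K_{shift}=K$ forces $K(m,n)$ to depend only on the difference $n-m$, so that $K$ extends consistently to a kernel $\tilde{K}$ on $\mathbb{Z}\times\mathbb{Z}$ by setting $\tilde{K}(m,n)\coloneqq K(m+k,n+k)$ for any sufficiently large $k\in\mathbb{N}_{0}$. Positive definiteness of $\tilde{K}$ would be inherited from that of $K$ by translating any finite configuration in $\mathbb{Z}\times H$ into $\mathbb{N}_{0}\times H$ without altering its Gram matrix. Applying the Kolmogorov decomposition to $\tilde{K}$ then produces operators $V_{n}\colon H\to H'$ indexed by $n\in\mathbb{Z}$ with total range, and the associated shift $BV_{n}a\coloneqq V_{n+1}a$ becomes not merely isometric (as in the proof of \prettyref{thm:8}) but also surjective, hence unitary on $H'$. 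Taking $\mathcal{K}\coloneqq H'$, $U\coloneqq B$, $W\coloneqq V_{0}$, and $P\coloneqq I_{\mathcal{K}}$ would then realize \eqref{eq:c2} by direct verification, since $\langle a,W^{*}U^{*m}U^{n}Wb\rangle=\langle V_{m}a,V_{n}b\rangle=\langle a,K(m,n)b\rangle$.

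The main point to watch is the extension step: I need to verify that $\tilde{K}$ is well-defined (independent of the translation $k$ used) and that positive definiteness transfers without hidden assumptions. Once this is in place, surjectivity of $B$ becomes automatic on $\mathbb{Z}$, whereas on $\mathbb{N}_{0}$ alone the range of $B$ would miss $V_{0}$ and $B$ would remain merely isometric, preventing $P$ from being the identity in the construction of \prettyref{thm:8}. This gain from passing to $\mathbb{Z}$ is precisely the mechanism that converts isometry into unitarity, and is the crux of the converse.
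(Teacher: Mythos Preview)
Your proposal is correct. The forward implication is the same one-line computation the paper would use. For the converse, the paper simply says the result ``follows from the proof of \prettyref{thm:8}'' and omits details; the intended route is presumably to observe that when $K_{shift}=K$ the shift $B$ on the Kolmogorov space $H'$ (built over $\mathbb{N}_{0}$) is an \emph{isometry}, so its minimal Sz.-Nagy dilation is a unitary \emph{extension} satisfying $UJ=JB$, whence $B^{*m}B^{n}=J^{*}U^{*m}U^{n}J$ and the projection $P=JJ^{*}$ can be dropped from the factorization. Your approach is a genuine variant: you first extend $K$ to a stationary p.d.\ kernel on $\mathbb{Z}\times\mathbb{Z}$ and run Kolmogorov there, so that the bilateral shift $B$ is unitary on $H'$ from the outset and you may take $\mathcal{K}=H'$, $U=B$, $P=I_{\mathcal{K}}$ literally. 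This bypasses the Sz.-Nagy step entirely and is slightly more elementary; conceptually your $\mathbb{Z}$-indexed Kolmogorov space \emph{is} the minimal unitary extension space of the paper's isometry, so the two constructions coincide up to identification. The extension step you flag is indeed the only thing to check, and your argument for it (translate any finite configuration into $\mathbb{N}_{0}$) is sound; note also that $W=V_{0}$ is isometric since $V_{0}^{*}V_{0}=K(0,0)=\mathbb{E}[I]=I_{H}$, which you use implicitly.
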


\begin{proof}
Follows from the proof of \prettyref{thm:8}. Details are omitted. 
\end{proof}
As an application, we obtain the following mean-square variant of
the classical von Neumann inequality. Recall that in the deterministic
setting, if $T$ is a contraction on a Hilbert space $H$, then 
\[
\left\Vert f\left(T\right)\right\Vert \le\sup_{\left|z\right|=1}\left|f\left(z\right)\right|
\]
for every complex polynomial $f$. A standard proof uses Sz.-Nagy's
unitary dilation, a well‐known approach appearing in numerous standard
texts (see, e.g., \cite{MR1858778,MR2743416,MR4248036}).

In the random setting, the argument follows the same outline, but
with two key modifications: 
\begin{enumerate}
\item The deterministic moments $T^{*m}T^{n}$ are replaced by the mean‐square
moment kernel $\mathbb{E}\left[T^{*m}T^{n}\right]$. 
\item The dilation now involves a projection $P$ in the factorization \eqref{eq:c2}. 
\end{enumerate}
With these, the spectral theorem for $U$ still controls the norm
of $f\left(U\right)$, leading directly to the inequality below.
\begin{prop}
Let $A\colon\Omega\rightarrow\mathcal{L}\left(H\right)$ be a random
operator. Suppose it admits a moment dilation, so that $\mathbb{E}\left[A^{*m}A^{n}\right]=W^{*}U^{*m}PU^{n}W$
as above. (This holds in particular when $\left\Vert A\left(\cdot\right)\right\Vert \leq1$
a.s.)

Then, for every complex polynomial $f\left(z\right)=\sum c_{n}z^{n}$
and $a\in H$, 
\[
\mathbb{E}\left[\left\Vert f\left(A\right)a\right\Vert ^{2}\right]\leq\left(\sup\nolimits_{\left|z\right|=1}\left|f\left(z\right)\right|\right)^{2}\left\Vert a\right\Vert ^{2}.
\]
Equivalently, 
\[
\mathbb{E}\left[f\left(A\right)^{*}f\left(A\right)\right]\leq\left(\sup\nolimits_{\left|z\right|=1}\left|f\left(z\right)\right|\right)^{2}I_{H}.
\]
\end{prop}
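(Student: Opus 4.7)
The plan is to run the classical Sz.-Nagy proof of the von Neumann inequality, carrying through the extra projection $P$ that appears in the moment dilation. Since $f(z)=\sum c_{n}z^{n}$ is a polynomial, everything is a finite sum: I expand $f(A)^{*}f(A)=\sum_{m,n}\overline{c_{m}}c_{n}A^{*m}A^{n}$ and take expectation termwise. The moment-dilation identity \eqref{eq:c2} then collapses the double sum into a product,
\[
\mathbb{E}\left[f(A)^{*}f(A)\right]=W^{*}\Bigl(\sum_{m}c_{m}U^{m}\Bigr)^{*}P\Bigl(\sum_{n}c_{n}U^{n}\Bigr)W=W^{*}f(U)^{*}Pf(U)W.
\]

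The structural observation that does the work is that $P$ is a selfadjoint projection, so $P=P^{*}P$, and therefore the right-hand side equals $(Pf(U)W)^{*}(Pf(U)W)$. Consequently, for every $a\in H$,
\[
\mathbb{E}\bigl[\left\Vert f(A)a\right\Vert ^{2}\bigr]=\bigl\langle a,\mathbb{E}[f(A)^{*}f(A)]a\bigr\rangle =\left\Vert Pf(U)Wa\right\Vert ^{2}\leq\left\Vert f(U)Wa\right\Vert ^{2}\leq\left\Vert f(U)\right\Vert ^{2}\left\Vert a\right\Vert ^{2},
\]
using $\Vert P\Vert\leq1$ and that $W$ is an isometry. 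Since this scalar inequality holds for every $a\in H$, the equivalent operator-inequality form follows.

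To finish, I apply the continuous functional calculus to the unitary $U$: because $\sigma(U)\subset\mathbb{T}$, the spectral theorem yields $\Vert f(U)\Vert=\sup_{z\in\sigma(U)}|f(z)|\leq\sup_{|z|=1}|f(z)|$. Substituting this into the previous display gives the asserted bound. The parenthetical remark that a.s.\ contractivity $\Vert A(\cdot)\Vert\leq1$ is sufficient for a moment dilation to exist is a quick application of \prettyref{thm:8}: for $y=\sum_{i}A^{m_{i}}a_{i}$, the pointwise bound $\Vert Ay(\omega)\Vert\leq\Vert y(\omega)\Vert$ gives $\mathbb{E}\Vert Ay\Vert^{2}\leq\mathbb{E}\Vert y\Vert^{2}$, which is exactly the inequality $K_{shift}\leq K$ read on the finite data $\{(m_{i},a_{i})\}$.

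There is no real obstacle here beyond algebraic bookkeeping; the only qualitative point worth flagging is that the projection $P$ in the moment-dilation factorization can only contract norms, so it does not inflate the uniform constant $\sup_{|z|=1}|f(z)|$ that the spectral theorem produces. Everything else is a straightforward transcription of the deterministic argument, with mean-square moments replacing the operator products $T^{*m}T^{n}$.
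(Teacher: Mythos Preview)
Your proof is correct and follows essentially the same line as the paper's: expand $f(A)^{*}f(A)$, apply the moment-dilation identity termwise, collapse to $\Vert Pf(U)Wa\Vert^{2}$, and bound via $\Vert P\Vert\le 1$, the isometry $W$, and the spectral theorem for $U$. Your additional justification of the parenthetical remark (that a.s.\ contractivity gives $K_{shift}\le K$ via \prettyref{thm:8}) is also correct and goes slightly beyond what the paper spells out.
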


\begin{proof}
One checks that 
\begin{align*}
\mathbb{E}\left[\left\Vert f\left(A\right)a\right\Vert ^{2}\right] & =\mathbb{E}\left[\left\Vert \sum\nolimits_{m}c_{m}A^{m}a\right\Vert ^{2}\right]\\
 & =\sum\nolimits_{m,n}\overline{c_{m}}c_{n}\left\langle x,\mathbb{E}\left[A^{*m}A^{n}\right]a\right\rangle \\
 & =\sum\nolimits_{m,n}\overline{c_{m}}c_{n}\left\langle a,W^{*}U^{*m}PU^{n}Wa\right\rangle \\
 & =\left\Vert Pf\left(U\right)Wa\right\Vert ^{2}\\
 & \leq\left\Vert f\left(U\right)\right\Vert ^{2}\left\Vert Wa\right\Vert ^{2}\\
 & =\left(\sup\nolimits_{\left|z\right|=1}\left|f\left(z\right)\right|\right)^{2}\left\Vert a\right\Vert ^{2}.
\end{align*}
Here, we used that $W$ is an isometry, $\left\Vert Wa\right\Vert =\left\Vert a\right\Vert $,
and applied the spectral theorem to the unitary $U$. 
\end{proof}
\begin{cor}
If $A$ has a moment dilation, then
\[
\mathbb{E}\left[\left\Vert Aa\right\Vert ^{2}\right]\leq\left\Vert a\right\Vert ^{2},\quad a\in H.
\]
The converse is not true in general. 
\end{cor}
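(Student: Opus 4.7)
The plan is to dispatch the two assertions separately, noting that the forward implication is essentially a one-line specialization of the preceding proposition while the failure of the converse requires an explicit probabilistic counterexample.

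For the forward direction I would simply invoke the mean-square von Neumann inequality just proved with the monomial $f(z)=z$; since $\sup_{|z|=1}|z|=1$, this yields $\mathbb{E}[\|Aa\|^2]\le\|a\|^2$ at once. As a self-contained alternative, the $(m,n)=(1,1)$ case of \eqref{eq:c2} gives $\mathbb{E}[A^*A]=W^*U^*PUW$, from which the orthogonal-projection identity $\langle x,Px\rangle=\|Px\|^2$ together with the isometry/unitarity of $W$ and $U$ yields $\mathbb{E}[\|Aa\|^2]=\|PUWa\|^2\le\|UWa\|^2=\|Wa\|^2=\|a\|^2$.

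For the failure of the converse I would exhibit a scalar counterexample: take $H=\mathbb{C}$ and let $A\colon\Omega\to\mathbb{C}$ be the random variable equal to $\sqrt{2}$ on an event of probability $1/2$ and $0$ off it, so that $\mathbb{E}[|A|^2]=1$ and the inequality $\mathbb{E}[\|Aa\|^2]\le\|a\|^2$ is satisfied (with equality). The subtle point, and what I would flag as the main obstacle, is that \prettyref{thm:8} only characterizes the \emph{constrained} moment dilations satisfying \eqref{eq:c3}, so it cannot be quoted directly to rule out dilations in the weaker sense of \prettyref{def:7}. To get around this I would rerun the projection-norm computation from the forward direction along the diagonal $(m,n)=(n,n)$: any $(W,U,P,\mathcal{K})$ realizing \eqref{eq:c2} forces $\mathbb{E}[|A^n|^2]=\|PU^nW\cdot 1\|_{\mathcal{K}}^2\le 1$ for every $n\in\mathbb{N}_0$, whereas in the example $\mathbb{E}[|A^2|^2]=\mathbb{E}[A^4]=\tfrac{1}{2}\cdot 4=2>1$. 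Hence no factorization \eqref{eq:c2} can exist, even though the inequality $\mathbb{E}[\|Aa\|^2]\le\|a\|^2$ does, which proves the converse fails.
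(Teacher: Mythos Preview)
Your proposal is correct. The paper does not actually supply a proof of this corollary: the forward implication is left implicit as the $f(z)=z$ case of the preceding proposition (exactly your first argument), and the failure of the converse is asserted without any example or justification. Your scalar counterexample together with the diagonal bound $\mathbb{E}[|A^{n}|^{2}]=\|PU^{n}W1\|^{2}\le 1$, valid for \emph{any} dilation in the sense of \prettyref{def:7} and not just the constrained ones of \prettyref{thm:8}, is a genuine addition that fills the gap the paper leaves open.
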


\bibliographystyle{amsalpha}
\bibliography{ref}

\end{document}